\def\titlerunning#1{\gdef\titrun{#1}}
\def\author#1{\gdef\autrun{\def\and{\unskip, }#1}\gdef\@author{#1}}
\newtheorem{thm}{Theorem}[section]
\newtheorem*{theorem-non}{Theorem}
\newtheorem{lem}[thm]{Lemma}
\newtheorem{pro}[thm]{Proposition}
\newtheorem{nota}[thm]{Notation}
\theoremstyle{plain}
\newtheorem{obs}[thm]{Observation}
\newtheorem{ques}[thm]{Question}
\newcommand{\coverpage}[3]{\thispagestyle{empty}
    \addtocounter{page}{-1}
\null\vspace*{-1cm} \hfill\includegraphics[scale=0.2]{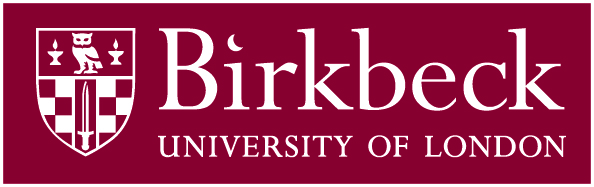} \vskip 2in
\begin{center} \begin{minipage}{0.7\textwidth}\begin{center}\Huge\bf{#1}\end{center} \end{minipage}\end{center}  \vfill
\begin{center} {\large By}\bigskip\\ {\large #2}\\ \end{center} \vfill 
 \framebox{\begin{minipage}{\textwidth}
Birkbeck Pure Mathematics Preprint Series\hfill
Preprint Number #3 \\ \\
\null\hfill www.bbk.ac.uk/ems/research/pure/preprints 
\end{minipage}}
\newpage}
\begin{document}


\baselineskip=17pt


\coverpage{A note on filled groups}{Chimere S. Anabanti and Sarah B. Hart}{17}
\titlerunning{A note on filled groups}

\title{A note on filled groups}
\author{Chimere S. Anabanti \thanks{The first author is supported by a Birkbeck PhD Scholarship}\\ c.anabanti@mail.bbk.ac.uk \and Sarah B. Hart\\ s.hart@bbk.ac.uk}
\date{}

\maketitle

\begin{abstract}
\noindent Let $G$ be a finite group and $S$ a subset of $G$. Then $S$ is {\em product-free} if $S \cap SS = \emptyset$, and $S$ {\em fills} $G$ if $G^{\ast} \subseteq S \cup SS$. A product-free set is locally maximal if it is not contained in a strictly larger product-free set.  Street and Whitehead [J. Combin. Theory Ser. A \textbf{17} (1974), 219--226] defined a group $G$ as {\em filled} if every locally maximal product-free set in $G$ fills $G$. Street and Whitehead classified all abelian filled groups, and conjectured that the finite dihedral group of order $2n$ is not filled when $n=6k+1$ ($k\geq 1$). The conjecture was disproved by the current authors in [Austral. Journal of Combinatorics \textbf{63 (3)} (2015), 385--398], where we also classified the filled groups of odd order. This brief note completes the classification of filled dihedral groups and discusses filled groups of order up to 100.
\end{abstract}

\section{Preliminaries}
Let $S$ be a non-empty subset of a group $G$. We say $S$ is product-free if $S\cap SS=\varnothing$. A product-free set $S$ is locally maximal if whenever $T$ is product-free in $G$ and $S\subseteq T$, then $S=T$. Finally a product-free set $S$ fills $G$ if $G^*\subseteq  S \cup SS$ (where $G^*$ is the set of all non-identity elements of $G$), and $G$ is a filled group if every locally maximal product-free set in $G$ fills $G$.  This definition, due to Street and Whitehead, was motivated by the observation that a product-free set in an elementary abelian 2-group $A$ is locally maximal if and only if it fills $A$, and hence the elementary abelian 2-groups are filled groups.  We list some results that we shall use in the next two sections.

\begin{thm}\label{T1}\begin{enumerate} 
\item[(i)] \cite[Lemma 3.1]{GH2009} Let $S$ be a product-free set in a finite group $G$. Then $S$ is locally maximal if and only if $G=T(S) \cup \sqrt{S}$, where $T(S):=S \cup SS \cup SS^{-1} \cup S^{-1}S$ and $\sqrt{S}:=\{x \in G:x^2 \in S\}$.
\item [(ii)] \cite[Lemma 1]{SW1974} If $G$ is a filled group, and $N$ is a normal subgroup of $G$, then $G/N$ is filled.
\item [(iii)]\cite[Theorem 2]{SW1974} A finite abelian group is filled if and only if it is $C_3$, $C_5$ or an elementary abelian 2-group.
\item [(iv)] \cite[Lemma 2.3]{AH2015} $C_3$ is the only filled group with a normal subgroup of index $3$.
\item [(v)] \cite[Lemma 2.5]{AH2015} If $G$ is a filled group with a normal subgroup $N$ of index $5$ such that not every element of order $5$ is contained in $N$, then $G \cong C_5$.
\item [(vi)]\cite[Theorem 2.6]{AH2015} The only filled groups of odd order are $C_3$ and $C_5$.
\item [(vii)] \cite[Prop 2.8]{AH2015} For $n \geq 2$, the generalized quaternion group of order $4n$ is not filled.
\end{enumerate}\end{thm}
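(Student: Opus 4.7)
The final statement is a compendium of seven results, each attributed to an earlier paper, assembled as the toolkit for what follows. My plan is to reproduce the published arguments in the order given, because (ii)--(vii) all depend on (i) in an essential way: (i) converts ``locally maximal'' from a quantifier-over-supersets condition into a single equation $G = T(S)\cup\sqrt{S}$, which is what makes every subsequent analysis computational rather than combinatorial.

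For (i), the plan is to show that $g \notin T(S)\cup\sqrt{S}$ is equivalent to $S\cup\{g\}$ being product-free. One direction checks the four forbidden memberships: $g \in SS$ rewrites as $g = s_1 s_2$; $g \in SS^{-1}$ and $g \in S^{-1}S$ give $s_1 = g s_2$ and $s_1 = s_2 g$; and $g \in \sqrt{S}$ gives $g\cdot g \in S$. The converse enumerates the possible product relations in $S\cup\{g\}$ and lands in exactly these sets. Once (i) is in hand, the standard method is: pick a candidate product-free $S$, compute $T(S)$ and $\sqrt{S}$, and read off which elements must be adjoined.

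For (ii), the plan is: start with a locally maximal product-free $\bar S$ in $G/N$, lift it to a set $S_0$ of coset representatives (which is product-free because any relation would project), extend $S_0$ inside $G$ to a locally maximal product-free $S$, invoke filledness of $G$ to get $G^* \subseteq S\cup SS$, and project. The projection yields that $\pi(S)\cup\pi(S)\pi(S)$ covers $(G/N)^*$, and combined with the local maximality of $\bar S$ (so that $\pi(S) = \bar S$) this shows $\bar S$ fills. The fiddly point is controlling $\pi(S)$, which is where one uses that $S$ was built by extending a transversal for $\bar S$.

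Parts (iii)--(vii) are more specialised applications. For (iii) the plan is to use (i), together with the simplification $SS^{-1} = S^{-1}S = SS$ available in the abelian setting, to exhibit, in each abelian group not among $C_3$, $C_5$, and elementary abelian $2$-groups, a concrete locally maximal product-free set that fails to fill. For (iv) and (v) one combines (ii) with an analysis of how elements of order $3$ or $5$ distribute among the cosets of $N$, and for (vi) and (vii) one case-splits on element orders and, in the quaternion case, on the unique involution. I expect the main obstacle in each of these to be engineering the explicit witness set that demonstrates non-filledness; once such a set is proposed, verifying the failure is a mechanical application of (i).
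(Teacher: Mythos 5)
The paper does not actually prove Theorem \ref{T1}: all seven parts are quoted with citations from \cite{GH2009}, \cite{SW1974} and \cite{AH2015} and serve only as imported tools, so there is no in-paper proof to compare yours against. Judged on its own merits, your sketch of (i) is essentially the correct argument (you should also note that $1\in SS^{-1}$ whenever $S\neq\emptyset$, so the identity never obstructs the equality $G=T(S)\cup\sqrt S$), but part (ii) has a genuine gap. You lift $\bar S$ to a transversal $S_0$, extend $S_0$ to a locally maximal product-free set $S$ in $G$, and then assert that local maximality of $\bar S$ forces $\pi(S)=\bar S$. It does not. If $s$ is adjoined during the extension and $\pi(s)=\bar t_1\bar t_2$ with $\bar t_1,\bar t_2\in\bar S$, then $s$ merely lies in the coset $t_1t_2N$; unless $s$ equals an actual product of two elements of $S$, no product relation is created, so $S$ can remain product-free while $\pi(S)\supsetneq\bar S$, and projecting $G^*\subseteq S\cup SS$ then gives information about $\pi(S)$, not about $\bar S$. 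The standard argument (Street--Whitehead) avoids this by lifting $\bar S$ to its \emph{full} preimage $\pi^{-1}(\bar S)$: this is product-free, and it is already locally maximal because $\pi^{-1}(\bar A)\pi^{-1}(\bar B)=\pi^{-1}(\bar A\bar B)$, so any element adjoinable to it would project to an element adjoinable to $\bar S$; no extension step is needed and the projection lands exactly on $\bar S\cup\bar S\bar S$.

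Parts (iii)--(vii) of your proposal are declarations of intent rather than proofs, and (iii) as planned is not feasible: you cannot ``exhibit, in each abelian group not among $C_3$, $C_5$ and the elementary abelian $2$-groups, a concrete witness set'' one group at a time, since there are infinitely many such groups. The actual argument uses (ii) to reduce to a finite list of minimal quotients (such as $C_4$, $C_9$, $C_{25}$, $C_p$ for $p\geq 7$, and the like) and only then produces explicit non-filling locally maximal sets. You also omit the ``if'' direction of (iii) entirely: one must prove that $C_3$, $C_5$ and the elementary abelian $2$-groups \emph{are} filled, which is a separate verification, not a matter of finding counterexample sets.
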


\section{Classification of filled dihedral groups}
A list of non-abelian filled groups of order less than or equal to $32$ was given in \cite{AH2015}. There are eight such groups: six are dihedral, and the remaining two are 2-groups. The dihedral groups on the list are those of order 6, 8, 10, 12, 14 and 22. Our aim in this section is to show that these are in fact the only filled dihedral groups. 

\begin{nota}
We write $D_{2n}=\langle x,y|~x^{n}=y^2=1, xy=yx^{-1} \rangle$ for the dihedral group of order $2n$ (where $n>2$). In $D_{2n}$, the elements of $\langle x\rangle$ are called rotations and the elements of $\langle x\rangle y$ are called reflections. For any subset $S$ of $D_{2n}$, we write $A(S)$ for $S \cap \langle x \rangle$, the set of rotations of $S$, and $B(S)$ for $S \cap \langle x \rangle y$, the set of reflections of $S$.
\end{nota}

\begin{obs}\label{obs1}
Suppose $S$ is a subset of $D_{2n}$. Let $A = A(S)$ and $B = B(S)$. Then,  because of the relations in the dihedral group, we have  $AA^{-1} = A^{-1}A$,  $AB = BA^{-1}$ and $B^{-1} = B$. Therefore \begin{align*}
SS &= AA \cup BB \cup AB \cup BA;\\
SS^{-1} &= AA^{-1} \cup BB \cup AB;\\
S^{-1}S &= AA^{-1} \cup BB \cup BA;\\
T(S) &= A \cup B \cup AA \cup AA^{-1} \cup BB \cup AB \cup BA \\ &= S \cup SS \cup AA^{-1}.
\end{align*}
We also  note that $\sqrt S = \sqrt A \subseteq \langle x \rangle$. 
\end{obs}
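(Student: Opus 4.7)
The plan is to verify each equality by direct computation from the defining relations $x^n = y^2 = 1$ and $yx = x^{-1}y$ of $D_{2n}$, decomposing $S = A \cup B$ (rotations and reflections) and chasing which half of the group each product lands in.

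First I would establish the three preliminary identities. Since $A \subseteq \langle x \rangle$ and $\langle x \rangle$ is abelian, $AA^{-1} = A^{-1}A$ follows immediately. Every reflection is an involution: $(x^i y)(x^i y) = x^i (yx^i) y = x^i x^{-i} y y = 1$, so $(x^iy)^{-1} = x^iy$ and hence $B^{-1} = B$. For $AB = BA^{-1}$, take $a = x^k \in A$ and $b = x^j y \in B$; then
\[
ab = x^k x^j y = x^j y \cdot y x^{-k} y \cdot \text{(no, simpler:)} \; ab = x^{k+j} y = x^j y \cdot x^{-k} = b \, a^{-1} \in BA^{-1},
\]
and the same calculation run backwards shows every element of $BA^{-1}$ lies in $AB$. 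Taking inverses of this identity also gives $A^{-1}B = BA$, which I will use below.

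Second, I would expand the products. Writing $S = A \cup B$ and distributing gives $SS = AA \cup AB \cup BA \cup BB$. For $SS^{-1}$, since $B^{-1} = B$,
\[
SS^{-1} = (A \cup B)(A^{-1} \cup B) = AA^{-1} \cup AB \cup BA^{-1} \cup BB,
\]
and using $AB = BA^{-1}$ the middle two terms coincide, leaving $AA^{-1} \cup AB \cup BB$. Symmetrically, $S^{-1}S = A^{-1}A \cup A^{-1}B \cup BA \cup BB = AA^{-1} \cup BA \cup BB$ by the two identities above. Taking the union $T(S) = S \cup SS \cup SS^{-1} \cup S^{-1}S$ then simply collects each of the seven pieces $A$, $B$, $AA$, $AA^{-1}$, $BB$, $AB$, $BA$ (with repetition), and comparing with $SS$ shows the only new set introduced beyond $S \cup SS$ is $AA^{-1}$, giving $T(S) = S \cup SS \cup AA^{-1}$.

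Finally, for $\sqrt{S}$: any reflection $x^iy$ squares to the identity, which does not lie in $S$ (this observation will be applied to product-free sets, for which $1 \notin S$), so no reflection belongs to $\sqrt{S}$. Every rotation squares to a rotation, so for $g \in \langle x \rangle$ we have $g^2 \in S$ if and only if $g^2 \in A$; hence $\sqrt{S} = \sqrt{A} \subseteq \langle x \rangle$. There is no real obstacle in this argument — it is essentially a bookkeeping exercise driven by the two structural facts that reflections are involutions and that conjugation by $y$ inverts rotations; the only point worth flagging is the implicit use of $1 \notin S$ in the last statement, which is harmless in the intended application.
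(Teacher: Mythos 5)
Your verification is correct and follows exactly the route the paper intends (the paper states this as an observation with no written proof, relying on the same routine computations with $yx = x^{-1}y$ and $y^2 = 1$ that you carry out). You were right to flag that $\sqrt{S} = \sqrt{A} \subseteq \langle x \rangle$ implicitly uses $1 \notin S$, which holds in every application since $S$ is product-free.
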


\begin{pro}\label{P1}
Let $n$ be an odd integer, with $n \geq 13$. Then $D_{2n}$ is not filled.
\end{pro}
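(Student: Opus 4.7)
The plan is to prove Proposition~\ref{P1} by constructing, for each odd $n \geq 13$, an explicit product-free subset $S \subseteq D_{2n}$ that is locally maximal but does not fill $D_{2n}$. Write $S = A \cup B$ as in Observation~\ref{obs1}, identify $A = A(S)$ with its exponent set in $\mathbb{Z}/n$, and set $I = \{i \in \mathbb{Z}/n : x^i y \in B\}$. Using Observation~\ref{obs1}, each of the three required properties of $S$ translates into an explicit combinatorial condition in $\mathbb{Z}/n$: product-freeness becomes $A \cap (A+A) = \varnothing$ and $A \cap (I-I) = \varnothing$; local maximality (via Theorem~\ref{T1}(i) and the identity $T(S) = S \cup SS \cup AA^{-1}$) becomes the pair of coverings $A \cup (A+A) \cup (A-A) \cup (I-I) \cup \sqrt{A} = \mathbb{Z}/n$ and $I \cup (I+A) \cup (I-A) = \mathbb{Z}/n$; and failure to fill becomes the existence of some non-zero $r \in (A-A) \cup \sqrt{A}$ lying outside $A \cup (A+A) \cup (I-I)$, in which case $x^r$ is the witness that $S$ does not fill.

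A natural candidate is $A = \{1, 4, -2\} \subseteq \mathbb{Z}/n$, for which $A+A = \{-4, -1, 2, 5, 8\}$ and $A-A = \{0, \pm 3, \pm 6\}$ are automatically disjoint from $A$ (so $A$ is product-free on its own), and $\sqrt{A} = \{2, (n+1)/2, n-1\}$ has a uniform description across $n$ (using that $n$ odd makes $2$ invertible modulo $n$). The index set $I$ is then chosen so that $I-I$ covers the rotations missed by $A \cup (A+A) \cup (A-A) \cup \sqrt{A}$, is disjoint from $A \cup (-A) = \{\pm 1, \pm 2, \pm 4\}$, and leaves at least one of the pairs $\{\pm 3\}$ or $\{\pm 6\}$ absent from $I-I$, so that $x^{\pm 3}$ or $x^{\pm 6}$ provides a missing non-identity element of $D_{2n} \setminus (S \cup SS)$; simultaneously $I$ must dominate $\mathbb{Z}/n$ under translations by $A \cup (-A) \cup \{0\}$ so that every reflection appears in $B \cup AB \cup BA$. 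For small $n$ a three-element $I$ suffices; for example $I = \{0, 5, 10\}$ for $n = 13$, $I = \{0, 5, 8\}$ for $n = 15$, and $I = \{0, 7, 12\}$ for $n = 17$ can each be verified directly against the combinatorial conditions above.

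The main obstacle is to produce a suitable $I$ for every odd $n \geq 13$. The constraints interact delicately: $I-I$ is symmetric and must simultaneously contain specified residues, avoid others, and come from an $I$ dominating $\mathbb{Z}/n$ under the appropriate translations, and for large $n$ one may need to enlarge $|A|$ or $|I|$ (for instance, taking $A = \{1, 4, 11, 17\}$ with $I = \{0, 5, 10\}$ works for $n = 19$). The natural approach is a case split on $n$ modulo $6$ (the odd residues $1, 3, 5$), giving in each case a parametric family of pairs $(A, I)$ depending on $n$; once the pair has been specified, the three combinatorial conditions are verified by a routine arithmetic check modulo $n$.
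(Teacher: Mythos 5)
Your combinatorial reformulation is essentially the right framework and matches the paper's setup (Observation~\ref{obs1} together with Theorem~\ref{T1}(i)), apart from one slip: product-freeness of $S$ also requires $I \cap \bigl((I+A)\cup(I-A)\bigr) = \varnothing$ (no reflection of $S$ may be the product of a rotation and a reflection of $S$); you omit this condition, and although your small examples happen to satisfy it, it must be on the list of things to verify. The genuine gap, however, is that the argument stops exactly where the work begins: the entire content of Proposition~\ref{P1} is the existence, for \emph{every} odd $n \ge 13$, of a pair $(A,I)$ meeting all the conditions, and you never exhibit such a family. You check four small values of $n$ and then assert that a case split on $n \bmod 6$ ``gives in each case a parametric family'' without writing one down. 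Note moreover that no family of bounded size can work: local maximality forces $T(S) = D_{2n}$ up to square roots, and since $|T(S)| \le |S| + 3|S|^2$ and $|\sqrt{S}| = |A|$, one gets $|A|+|I| \gtrsim \sqrt{n}$, so the sets must grow with $n$. Thus ``enlarge $|A|$ or $|I|$'' is precisely the step that needs a uniform recipe and a uniform verification; producing it is the hard part of the proposition, not a routine arithmetic check.

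For comparison, the paper resolves exactly this difficulty by writing $n$ as one of $5k-6$, $5k-4$, $5k-2$, $5k$, $5k+2$ with $k$ odd, and taking $A$ to be an arithmetic progression of roughly $k \approx n/5$ rotations with common difference $2$ (for instance $\{x^k, x^{k+2}, \dots, x^{3k-2}\}$) together with a block of about $k$ consecutive reflections. Because sums and differences of such interval-like sets are again interval-like, the sets $A+A$, $A-A$, $I-I$ and $I\pm A$ have closed forms valid for all $n$ in each case, the covering conditions can be checked once per case, and a uniformly described witness ($x^{3k} \notin S \cup SS$) shows the set does not fill. To repair your proof you would need to supply parametric families of comparable explicitness for each of your residue classes and carry out the verification; as it stands the proposal is a programme rather than a proof.
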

\begin{proof} Let $n$ be an odd integer. Then there is an odd number $k$ for which $n$ is either $5k-6$, $5k-4$, $5k-2$, $5k$ or $5k+2$.  \\

 Suppose first that $n$ is $5k-2$ for an odd integer $k$. Since $n \geq 13$, we note that $k \geq 3$. Now consider the following set $S$. $$S:=\{x^k,x^{k+2}, \cdots, x^{3k-2}; y, xy, \cdots, x^{k-1}y \}.$$ 
 We calculate that  
 $A(SS)=\{x^{2k},x^{2k+2},\cdots,x^{5k-3} \} \cup \{1,x,\cdots,x^{k-1}\} \cup \{x^{4k-1},x^{4k},\cdots,1\}$ and $B(SS)=\langle x \rangle y - B(S)$. Observe that $x^{3k}\notin S \cup SS$; so $S$ does not fill $G$.\\
 Let $A=A(S)$. Then $AA^{-1}=\{1,x^2,x^4,\cdots,x^{2k-2}\} \cup \{x^{3k},x^{3k+2}, \cdots,x^{5k-4},1\}$. Thus $T(S)=G$. By Theorem \ref{T1}(i) therefore, $S$ is locally-maximal product free in $G$, but we have noted that $S$ does not fill $G$.
 
 Next we suppose $n = 5k$ for an odd integer $k$, and again since $n \geq 13$, we have $k \geq 3$. Taking the same set $S = \{x^k,x^{k+2}, \cdots, x^{3k-2}; y, xy, \cdots, x^{k-1}y \}$ we find that $S$ is locally maximal product-free but does not fill $G$.
 
 Now suppose $n=5k+2$ for $k\geq 3$ and odd. The set $T$ given by $$T=\{x^{k-2},x^k, \cdots, x^{3k-2}; y, xy, \cdots, x^{k-3}y \}$$ is locally maximal product-free in $G$ (again using Theorem \ref{T1}(i)), but does not fill $G$ since $x^{3k}\notin T \cup TT$.
 
Next suppose $n=5k-6$ for $k\geq 5$ and odd. Then the set $U$ given by $$U=\{x^k, x^{k+2}\cdots, x^{3k-2}; y, xy, \cdots, x^{k-1}y \}$$ is a locally maximal product-free set in $G$ that does not fill $G$. 

Finally, consider the case $n=5k-4$ for $k\geq 5$ and odd. The set $V$ given by $$V=\{x^{k-2},x^k, \cdots, x^{3k-4}; y, xy, \cdots, x^{k-3}y \}$$ is locally maximal product-free set in $G$ which does not fill $G$.
\end{proof}

\begin{lem}
\label{lem} The only filled dihedral groups of order up to 32 are $D_6$, $D_8$, $D_{10}$, $D_{12}$, $D_{14}$ and $D_{22}$. Furthermore, the dihedral group of order 44 is not filled.
\end{lem}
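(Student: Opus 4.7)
The first sentence of the lemma is a summary of the case-by-case classification carried out in \cite{AH2015}, so I would simply invoke that reference to cover all dihedral groups of order at most $32$. All of the new work lies in the ``furthermore'' clause, which asserts that $D_{44}$ fails to be filled. Because $22$ does not fall into any of the residue classes treated in Proposition~\ref{P1}, the recipes used there do not apply, and an \emph{ad hoc} locally maximal product-free set has to be exhibited.

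The plan is to construct $S = A \cup B$ in $D_{44}$, with $B = \{y, xy, x^2y, x^3y\}$ a short interval of reflections and $A$ a carefully chosen set of rotations, and then to verify three things: (i) $S$ is product-free, which by Observation~\ref{obs1} reduces to checking $A \cap AA = \emptyset$, $A \cap BB = \emptyset$ and $B \cap (AB \cup BA) = \emptyset$; (ii) some non-identity $g \in D_{44}$ lies outside $S \cup SS$, so that $S$ does not fill $D_{44}$; and (iii) $T(S) \cup \sqrt{S} = D_{44}$, which by Theorem~\ref{T1}(i) yields local maximality. Since $T(S) = S \cup SS \cup AA^{-1}$ by Observation~\ref{obs1}, the missing element $g$ of step (ii) must be produced either as a difference $ab^{-1}$ with $a, b \in A$, or as an element of $\sqrt{S}$; this is the slack that makes the construction possible.

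My candidate is $A = \{x^4, x^7, x^9, x^{10}, x^{12}, x^{15}\}$ with $g = x^6$. Direct arithmetic in $\mathbb{Z}_{22}$ verifies the three product-free conditions of step (i), and shows that $A \cup AA \cup BB$ covers every rotation except $x^6$ while $B \cup AB \cup BA$ exhausts all reflections, so $S \cup SS$ is missing precisely $x^6$ and hence $S$ does not fill. The leftover element $x^6$ is recovered both from $AA^{-1}$ (since $x^6 = x^{10}(x^4)^{-1}$) and from $\sqrt{S}$ (since $(x^6)^2 = x^{12} \in A$), so that $T(S) \cup \sqrt{S} = D_{44}$ and $S$ is locally maximal.

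The main obstacle is identifying $A$. The symmetric rotation sets used in Proposition~\ref{P1} (all odd exponents, equally spaced by two) force $AA^{-1}$ to consist only of even residues and make $\sqrt{S}$ empty, so that any LMPF extension of such a set in $D_{44}$ turns out either impossible or to already fill the group. The trick is to break parity by throwing some even exponents into $A$, arranged so that their halves give nontrivial square roots while the arithmetic remains tight enough to keep $A$ sum-free; once $A$ has been chosen in this way, the calculations of $AA$, $BB$, $AB$, $BA$, $AA^{-1}$ and $\sqrt{S}$ are routine.
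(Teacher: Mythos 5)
Your proposal is correct and takes essentially the same approach as the paper: the first claim is dispatched by citing \cite{AH2015}, and the second by exhibiting an explicit locally maximal product-free set in $D_{44}$ that fails to fill it, verified via Observation~\ref{obs1} and Theorem~\ref{T1}(i). Your witness $\{x^4,x^7,x^9,x^{10},x^{12},x^{15},y,xy,x^2y,x^3y\}$ differs from the paper's $\{x^2,x^5,x^8,x^{18},x^{21},x^5y,x^{16}y\}$, but I have checked the arithmetic and it does the job: $S\cup SS$ misses exactly $x^6$, while $x^6=x^{10}(x^4)^{-1}\in AA^{-1}$ (and $(x^6)^2=x^{12}\in S$) gives $T(S)\cup\sqrt{S}=D_{44}$.
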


\begin{proof}
The filled groups of order up to 32 were classified in \cite{AH2015}, and this gives the information about filled dihedral groups in that range. For the second part of the lemma, an application of Theorem \ref{T1}(i), along with a straightforward calculation, shows that the set $\{x^2,x^5,x^8,x^{18}, x^{21},x^5y, x^{16}y\}$ is locally maximal product-free in  $D_{44}$, but it does not fill $D_{44}$. 
\end{proof}

\begin{thm}\label{C1}
The only filled dihedral groups are $D_{6}$, $D_{8}$, $D_{10}$, $D_{12}$, $D_{14}$ and $D_{22}$.
\end{thm}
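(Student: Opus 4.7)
The plan is to combine the results already established, so no substantially new construction is required. By Lemma \ref{lem} it suffices to prove that $D_{2n}$ is not filled for every $n \geq 17$. For odd $n$, this is immediate from Proposition \ref{P1} (since then $n \geq 13$), so the substantive case is even $n \geq 18$.

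For this case I would invoke Theorem \ref{T1}(ii) together with the observation that every subgroup of $\langle x \rangle$ is normal in $D_{2n}$: for each divisor $k \geq 3$ of $n$, one has $D_{2n}/\langle x^k \rangle \cong D_{2k}$, so if $D_{2n}$ were filled then $D_{2k}$ would be filled for every such $k$. Translating the known non-filled dihedral groups into forbidden divisors of $n$, Lemma \ref{lem} rules out divisors $k \in \{8, 9, 10, 12, 13, 14, 15, 16, 22\}$, while Proposition \ref{P1} rules out every odd $k \geq 13$.

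I would then enumerate the even $n \geq 18$ satisfying these constraints. Writing $n = 2^a m$ with $m$ odd, the odd-divisor constraints restrict the prime factors of $m$ to $\{3, 5, 7, 11\}$, rule out $m$ having $9$ as a divisor (forbidden directly), and rule out any product of two distinct primes from $\{3, 5, 7, 11\}$ as well as $25, 49, 121$ (each an odd divisor at least $13$), so $m \in \{1, 3, 5, 7, 11\}$. The even-divisor constraints then give $a \leq 2$ (since $8$ is forbidden), eliminate $m \in \{5, 7\}$ as soon as $a \geq 1$ (since $10$ and $14$ are forbidden), and force $a \leq 1$ when $m = 3$ (since $12$ is forbidden). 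The only surviving possibilities with $n \geq 18$ are $n = 22$ and $n = 44$; each has $22$ as a divisor, contradicting the forbiddenness of $22$ (which holds because $D_{44}$ is not filled, by Lemma \ref{lem}).

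The whole argument is really a bookkeeping exercise once the quotient reduction is in place, and the chief risk is overlooking a case or a divisor in the enumeration. All the genuinely new technical work has already been carried out in Proposition \ref{P1} and Lemma \ref{lem}.
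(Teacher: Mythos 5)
Your argument is correct, and it rests on exactly the same three ingredients as the paper's proof: Proposition \ref{P1} for odd $n$, the base data in Lemma \ref{lem}, and Theorem \ref{T1}(ii) applied to quotients of $D_{2n}$ by subgroups of $\langle x\rangle$. The difference is in how the even case is organised. The paper writes the order as $4m$ and inducts on $m$, using only a single quotient each time (the quotient by the centre, $D_{4m}/Z(D_{4m}) \cong D_{2m}$), with $D_{16}$, $D_{20}$, $D_{24}$, $D_{28}$ as base cases and $m=11$ funnelled to the explicit $D_{44}$ counterexample. You instead use the full family of quotients $D_{2n}/\langle x^k\rangle \cong D_{2k}$ for every divisor $k\geq 3$ of $n$ and turn the problem into a one-shot enumeration of admissible divisor patterns, landing on $n\in\{22,44\}$ and killing both via the forbidden divisor $22$. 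Your bookkeeping checks out: the forbidden set $\{8,9,10,12,13,14,15,16,22\}$ together with all odd $k\geq 13$ is exactly what Lemma \ref{lem} and Proposition \ref{P1} give, and the reduction to $m\in\{1,3,5,7,11\}$ with $a\leq 2$ is right. The paper's induction is leaner in that it only ever invokes one normal subgroup, while your version avoids induction entirely at the cost of a slightly longer (but transparent and easily verified) case analysis; both need the same external inputs, including the fact that $D_{44}$ is not filled, which cannot be obtained from quotients alone.
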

\begin{proof}
Proposition \ref{P1} along with Lemma \ref{lem} deal with the case of dihedral groups of order $2n$ where $n$ is odd. The only such groups that are filled are $D_{6}$, $D_{10}$, $D_{14}$ and $D_{22}$. It remains to consider dihedral groups of order $4m$, for an integer $m$. The groups $D_{8}$ and $D_{12}$ are filled, so assume $m \geq 4$ and we proceed by induction on $m$. Certainly $D_{16}$, $D_{20}$, $D_{24}$ and $D_{28}$ are not filled, so we can assume $m \geq 8$. If $G$ is dihedral of order $4m$ with $m \geq 8$, then the quotient of $G$ by its centre is dihedral of order $2m$. If $G$ is filled then by Theorem \ref{T1}(ii), this group of order $2m$ must be filled. If $m$ is odd, then by Proposition \ref{P1} and Lemma \ref{lem} and the assumption that $m \geq 8$, we have $m = 11$, so that $G$ is $D_{44}$, which by Lemma \ref{lem} is not filled. Suppose $m$ is even, so $m=2t$ for some $t$ with $4 \leq t < m$. Then inductively $D_{4t}$ is not filled, so $G$ is not filled. This completes the proof.
\end{proof}

\section{Filled groups of orders up to 100}
This section focuses on the classification of filled groups of orders up to $100$. The filled groups of order up to 32 are already known. Above order 32, we can immediately discount abelian, dihedral and generalized quaternion groups. There is also the following observation, which deals with groups of order $4p$, where $p$ is an odd prime.  

\begin{lem}\label{P31}
Let $p$ be an odd prime. If $G$ is a filled group of order $4p$, then $p=3$ and the group is $D_{12}$. \end{lem}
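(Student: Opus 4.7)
The plan is to classify all possibilities for $G$ via Sylow analysis and then apply the results of Theorem \ref{T1} and Theorem \ref{C1} to each candidate.

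First I would study the Sylow $p$-subgroup $P$ of $G$. Since $n_p\mid 4$ and $n_p\equiv 1\pmod p$, either $P\triangleleft G$, or $p=3$ and $n_3=4$. In the latter case, the $4$ Sylow $3$-subgroups contribute $8$ elements of order $3$, leaving just enough room for a unique (hence normal) Sylow $2$-subgroup; the only group of order $12$ with this structure is $A_4$. But $A_4$ has a normal subgroup of index $3$, so Theorem \ref{T1}(iv) forces $A_4\cong C_3$, a contradiction. Henceforth I may assume $P\triangleleft G$.

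Next I would examine $G/P$, which has order $4$ and so is either $C_4$ or $V_4=C_2\times C_2$. Since $C_4$ is not filled by Theorem \ref{T1}(iii), the case $G/P\cong C_4$ is ruled out by Theorem \ref{T1}(ii). So $G/P\cong V_4$. A Sylow $2$-subgroup $Q$ of $G$ then satisfies $P\cap Q=1$ (by coprime orders) and $|PQ|=|G|$, so $G=P\rtimes Q$ with $Q\cong V_4$. A trivial action would make $G$ abelian of order $4p$, which Theorem \ref{T1}(iii) again excludes; so the action is nontrivial. The image of $Q\to\mathrm{Aut}(P)\cong C_{p-1}$ is generated by elements of order at most $2$, hence is the unique subgroup of order $2$, so the kernel is a $C_2$ direct factor of $V_4$. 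Splitting off this kernel gives
\[G\;\cong\;(P\rtimes C_2)\times C_2\;\cong\;D_{2p}\times C_2\;\cong\;D_{4p},\]
where the last isomorphism uses that $p$ is odd.

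Finally, Theorem \ref{C1} says $D_{4p}$ is filled only when $4p\in\{6,8,10,12,14,22\}$, and among these only $4p=12$ is divisible by $4$ with $p$ an odd prime; thus $p=3$ and $G\cong D_{12}$. The only slightly non-routine step in this outline is the identification of the nontrivial semidirect product $P\rtimes V_4$ with $D_{4p}$, but this is a standard check since the action factors through a $C_2$ quotient of $V_4$; every other step is a direct appeal to results already in hand.
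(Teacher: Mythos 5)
Your proof is correct, and it reaches the paper's conclusion by a noticeably more self-contained route. The paper's own proof quotes the known classification of groups of order $4p$ (the non-abelian ones being $D_{4p}$, the generalized quaternion group $Q_{4p}$, and, when $p\equiv 1 \pmod 4$, a group $C_p\rtimes C_4$), then disposes of $Q_{4p}$ by Theorem \ref{T1}(vii), of $C_p\rtimes C_4$ by its $C_4$ quotient, and of $D_{4p}$ by Theorem \ref{C1}; the case $p=3$ is handled separately by citing the order-$12$ classification. You instead derive the structure from scratch: Sylow counting plus Theorem \ref{T1}(iv) eliminates the $n_p=4$ case (which only occurs for $p=3$ and yields $A_4$, killed by its index-$3$ normal subgroup), and once $P\trianglelefteq G$ the order-$4$ quotient $G/P$ must be $V_4$, since a $C_4$ quotient is fatal by Theorem \ref{T1}(ii) and (iii); the nontrivial action of $V_4$ on $C_p$ then forces $G\cong D_{2p}\times C_2\cong D_{4p}$. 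This buys two things: you never need the external classification of groups of order $4p$, and you never need Theorem \ref{T1}(vii), because $Q_{4p}$ has a normal Sylow $p$-subgroup with $Q_{4p}/P\cong C_4$ and so is swept up by the quotient argument along with $C_p\rtimes C_4$. The paper's version is shorter on the page because it outsources the group theory; yours is longer but leans on fewer prior results, and it treats $p=3$ uniformly rather than by appeal to the small-order classification. The steps you flag as routine (the splitting $G=P\rtimes Q$ via coprimality, and the identification of the nontrivial $P\rtimes V_4$ with $D_{4p}$ using that the action factors through a $C_2$ quotient) do check out.
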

\begin{proof}
The only filled group of order 12 is $D_{12}$. So we may assume $p \geq 5$. Recall that if $p\not\equiv 1 \mod~4$, then $D_{4p}$ and $Q_{4p}$ are the only non-abelian groups of order $4p$, and for $p\equiv 1 ~mod~4$, the non-abelian groups of order $4p$ are $D_{4p}$, $Q_{4p}$ and $G_{4p}$, where $$Q_{4p}:= \langle x, y|~ x^{2p} = 1, y^2 = x^p, xy = yx^{-1}\rangle$$ and $$G_{4p}:=C_p \rtimes_{\mu} C_4$$ for $C_p=\langle a \rangle$, $C_4=\langle b \rangle$ and $\mu(b)(a) = a^{(p-1)/2}$. In the light of Theorem \ref{T1}(vii) and Theorem \ref{C1}, we only need to show that $G_{4p}$ is not filled. But $G_{4p}$ has a quotient isomorphic to $C_4$; hence, by Theorem \ref{T1}(ii), $G_{4p}$ is not filled.
\end{proof}

We next use the fact that filled groups must have filled quotients (Theorem \ref{T1}(ii)) to eliminate nearly all other candidates. If any candidate has a normal subgroup of index 3, then by Theorem \ref{T1}(iv) it cannot be filled (in spite of the fact that $C_3$ is filled). The remaining groups have the property that all their quotients are filled. These are dealt with using a brute-force search for locally maximal product-free sets that do not fill the group in question. We do this using the program below in \cite{GAP4}.



\begin{verbatim}
### A program to test whether a given set is product-free
PFTest:= function(T)
    local  x, y, prod;
    for x in T do for y in T do
        if x*y in T then
           return 1;
        fi;
    od; od;
    return 0;
end;

### This program stops as soon as a locally maximal product-free
set which does not fill the group G is found.
nonfill:=function(G,k)
local L, lis, counterex, combs, x, pf, H, y, z, i, q, g;
L:=List(G); lis:=[];
for i in [2..Length(L)] do
  Add(lis,L[i]);
od;
# Take L to be the lis of non-id elements of G
L:=lis; counterex:=[];
for pf in IteratorOfCombinations(L,k) do
    if PFTest(pf)=0 then
       H:=Difference(L,pf);
       for y in [1..k] do for z in [1..k] do
           H:=Difference(H, [pf[y]*pf[z]]);
       od; od;
       if Size(H) > 0 then
          for y in [1..k] do for z in [1..k] do
            H:=Difference(H, [pf[y]*(pf[z])^-1, ((pf[y])^-1)*pf[z]]);
          od; od;
          for q in L do
               if q^2 in pf then
                  H:=Difference(H, [q]);
               fi;
          od;
          if Size(H) = 0 then
             counterex:=pf; break;
          fi; 
       fi;
    fi;
od;
return counterex;
end;
\end{verbatim}

\noindent Our investigation shows that if $k$ is the order of a filled group with $32 < k \leq 127$, then $k=64$. Certainly the elementary abelian group of order 64 is filled. There are two further groups of order 64 whose status we were not able to determine. So, there are at most three filled groups of order 64. This means that every known filled group is either abelian ($C_3$, $C_5$ or an elementary abelian 2-group), dihedral ($D_6, D_8$, $D_{10}$, $D_{12}$, $D_{14}$ or $D_{22}$) or a 2-group, with the only  known nonabelian examples being $D_8$, $D_8 \times C_2$ and $D_8 \ast Q_8$ (one of the two extraspecial groups of order 32).     
 We conclude with the following question:
\begin{ques}
Are there any filled groups of order greater than 22 which are not 2-groups?
\end{ques}

\end{document}